%

\documentclass[10pt,a4paper]{article}
\usepackage{amsmath}
\usepackage{amssymb}
\usepackage{amsthm}
\usepackage{amsfonts}
\usepackage{enumerate}
\usepackage{pstricks}
\usepackage{pst-plot}
\usepackage{pst-poly}
\usepackage{graphics} 
\usepackage{graphicx}
\usepackage{pgfpages}
\usepackage{algorithm}
\usepackage{algorithmic}
\usepackage{url}
\usepackage{booktabs}
\usepackage{verbatim}
\usepackage[margin=1.3in]{geometry}
\frenchspacing



\addtolength{\parskip}{0.5ex}

%
\newcommand{\tluste}[1]{\mbox{\mathversion{bold}$ #1 $}}

\newcommand{\imace}[1]{\mbox{$\tluste{#1}$}}

\def\Mid#1{{#1^c}}
\def\Rad#1{#1^\Delta}

\newcommand{\onum}[1]{\mbox{$\overline{{#1}}$}} 
\newcommand{\unum}[1]{\mbox{$\underline{{#1}}$}}

\newcommand{\ivr}[1]{\mbox{$\tluste{#1}$}} 

\newcommand{\inum}[1]{\mbox{$\tluste{#1}$}}

\newcommand{\R}[0]{{\mathbb{R}}}



\def\eps{{\varepsilon}}

\newcommand{\mmid}[0]{;\,}		

\newcommand{\seznam}[1]{{\{1, \ldots, {#1}\}}}


\newcommand{\st}[0]{{\ \ \mbox{subject to}\ \ }}

\DeclareMathOperator{\diag}{diag}	

\def\nref#1{$(\ref{#1})$}


\newtheorem{proposition}{Proposition}

\newtheorem{conjecture}{Conjecture}
\theoremstyle{definition}

\newtheorem{example}{Example}

\begin{document}

\title{On the Efficient Gerschgorin Inclusion Usage in the Global Optimization $\alpha$BB Method}

\author{
  Milan Hlad\'{i}k\footnote{
Charles University, Faculty  of  Mathematics  and  Physics,
Department of Applied Mathematics, 
Malostransk\'e n\'am.~25, 11800, Prague, Czech Republic, 
e-mail: \texttt{milan.hladik@matfyz.cz}
}
}

\date{\today}
\maketitle

\begin{abstract}
In this paper, we revisit the $\alpha$BB method for solving global optimization problems. We investigate optimality of the scaling vector used in Gerschgorin's inclusion theorem to calculate bounds on the eigenvalues of the Hessian matrix. We propose two heuristics to compute good scaling vector~$d$, and state three necessary optimality conditions for optimal~$d$. Since the scaling vector calculated by the second presented method satisfies all three optimality conditions, it serves as a cheap but efficient solution.
\end{abstract}


\section{Introduction}

Interval branch \& bound is a deterministic global optimization method to rigorously enclose (under some general assumptions) the optimal solutions and optimal values by arbitrarily tight intervals. The basic idea is to include the feasible set in an initial box (hyperinterval), and then iteratively split the box into sub-boxes and eliminate the idle ones not containing optimal solutions.
Depending on the concrete splitting strategy, relaxation of nonlinear or nonconvex terms, and other features we distinguish different approaches \cite{FloAkr2005,Flo2000,HanWal2004,HenTot2010,Kea1996,Kea2011,KreKub2010,Neu2004,NinMes2011}.

We focus particularly on the $\alpha$BB method \cite{AdjDal1998, AdjAnd1998, AndMar1995, Flo2000, FloGou2009, FloPar2009}, which is based on a convex relaxation in the following way.
Let $f:\R^n\mapsto\R$ be a twice-differentiable objective or constraint function and $x_i\in\inum{x}_i=[\unum{x}_i,\onum{x}_i]$, $i=1,\dots,n$, interval domains for the variables. 
The aim is to construct a function $g:\R^n\mapsto\R$ satisfying two conditions:
\begin{enumerate}[(1)]
\item
$f(x)\geq g(x)$ for every $x\in\ivr{x}$,
\item
$g(x)$ is convex on $x\in\ivr{x}$.
\end{enumerate}
The classical global optimization $\alpha$BB method utilizes the convex underestimator in the form of
\begin{align}\label{fceUnder}
g(x):=f(x)-\sum_{i=1}^n\alpha_i(\onum{x}_i-x_i)(x_i-\unum{x}_i),
\end{align}
where $\alpha_i\geq0$, $i=1,\dots,n$, are determined such that $g(x)$ is convex. The Hessian of $g(x)$ reads
$$
\nabla^2 g(x)=\nabla^2 f(x)+2\diag(\alpha),
$$
where $\diag(\alpha)$ is the diagonal matrix with entries $\alpha_1,\dots,\alpha_n$. There are several modifications of this method. The generalization from \cite{AkrMey2004,SkjWes2012} considers convex underestimators in the form of
$$
g(x):=f(x)-(\onum{x}-x)^TP(x-\unum{x})+q,
$$
where $P\in\R^{n\times n}$ is a symmetric matrix with a non-negative diagonal and $q\in\R$ is a correction value calculated so that the underestimation property holds true. Naturally, when $P$ is a diagonal matrix and $q=0$, the underestimator reduces to \nref{fceUnder}. 
Another class of underestimators defined as
$$
g(x):=f(x)-\sum_{i=1}^n 
 (1-e^{\gamma_i(\onum{x}_i-x_i)})(1-e^{\gamma_i(x_i-\unum{x}_i)})
$$
was discussed in \cite{AkrFlo2004,AkrFlo2004b,FloPar2009}, yielding the so called $\gamma$BB method. Theoretical justification for  $\alpha$BB and $\gamma$BB relaxation terms is given in \cite{FloKre2007}.
A global optimization method QBB based on convex underestimators and branch \& bound scheme on simplices, was proposed in \cite{ZhuKun2005}.
Other convex and linear relaxations were investigated in \cite{Ans2012,DomNeu2012,ScoStu2011}, for instance. 

Let us consider the classical $\alpha$BB approach utilizing the form \nref{fceUnder}. The parameters $\alpha_i$-s may be calculated it the following way.
Let $\imace{H}$ be an interval matrix enclosing the image of $\nabla^2 f(x)$ over $x\in\ivr{x}$. That is, the $(i,j)$th element of $\imace{H}$ is an interval $\inum{h}_{ij}=[\unum{h}_{ij},\onum{h}_{ij}]$ such that
$$
\frac{\partial^2}{\partial x_i \partial x_j}f(x)\in\inum{h}_{ij},\quad
\forall x\in\ivr{x}.
$$
Now, to achieve convexity of $g(x)$, it is sufficient to choose $\alpha$ such that each matrix in $\imace{H}+2\diag(\alpha)$ is positive semidefinite, i.e., its eigenvalues are non-negative.
Eigenvalues of interval matrices were investigated e.g.\ in \cite{AdjDal1998,Flo2000,Hla2013a,HlaDan2011b,HlaDan2010,Mon2011}. For the purpose of the $\alpha$BB method, it seems that the most convenient method for bounding eigenvalues of interval matrices is the scaled Gerschgorin inclusion \cite{AdjDal1998,AdjAnd1998,Flo2000}. Its benefits are that it is easy to compute and eliminate the unknowns $\alpha_i$, $i=1,\dots,n$, and it is also usually sufficiently tight. For any positive $d\in\R^n$, we can put
\begin{align}\label{alpha}
\textstyle
\alpha_i:=\max\left\{0, -\frac{1}{2}
 \left(\unum{h}_{ii}-\sum_{j\not=i}|\inum{h}_{ij}|d_j/d_i\right)\right\},
\quad i=1,\dots,n,
\end{align}
where $|\inum{h}_{ij}|=\max\left\{|\unum{h}_{ij}|,|\onum{h}_{ij}|\right\}$. To reflect the range of the variable domains, it is recommended to use $d=\Rad{x}:=\frac{1}{2}(\onum{x}-\unum{x})$. The efficiency of symbolic computation of the Hessian matrix is studied in \cite{Hla2013b}.

In this paper, we investigate optimality of the choice of $d$, and propose some heuristics to achieve better scaling vector $d$ for which the corresponding vector $\alpha$ have less conservative overestimation. We state three optimality conditions for $d$ and show that the second presented method satisfies all of them.

\section{Computation of $\alpha$ and $d$}

In this section, we study computation of $\alpha$ from \nref{alpha} and optimal or nearly optimal choice of the scaling vector $d$.

\subsection*{Optimal choice of $d$}

Since we employ only one endpoint of each interval in $\imace{H}$, it is sufficient to consider the point matrix $H\in\R^{n\times n}$ defined as
\begin{align*}
h_{ii}&:=\unum{h}_{ii}\ \ \forall i,\\
h_{ij}&:=-\max(|\unum{h}_{ij}|,|\onum{h}_{ij}|),\ \ \forall i\not=j.
\end{align*}
Now, \nref{alpha} takes the form of
\begin{align}\label{alpha2}
\textstyle
\alpha_i:=\max\left\{0, -\frac{1}{2}
 \left(h_{ii}+\sum_{j\not=i}h_{ij}d_j/d_i\right)\right\},
\quad i=1,\dots,n.
\end{align}
We will assume that $H$ is not block diagonal (even after simultaneous permutation of rows and columns) since otherwise we can process the particular blocks independently.

Computation of $\alpha$ from  \nref{alpha2} depends on the scaling vector $d>0$. In \cite{AdjDal1998,Flo2000}, the authors propose to use $d:=\Rad{x}$ to reflect the widths of interval domains of variables. Below, we show that this choice is optimal in some sense.

\begin{proposition}\label{thmOptAplhaNeg}
Suppose that \nref{alpha2} is satisfied as equation without using the positive part for $d=\Rad{x}$.
Then the maximum separation distance between $f$ and its underestimator is minimized for $d=\Rad{x}$.
\end{proposition}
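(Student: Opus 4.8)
The plan is to write the \emph{maximum separation distance} as an explicit function of the scaling vector and then show $d=\Rad{x}$ is its global minimizer. The separation between $f$ and its underestimator is $f(x)-g(x)=\sum_{i=1}^n\alpha_i(\onum{x}_i-x_i)(x_i-\unum{x}_i)$. Since the terms are separable, each factor $(\onum{x}_i-x_i)(x_i-\unum{x}_i)$ is a concave parabola on $[\unum{x}_i,\onum{x}_i]$ maximized at the midpoint with value $(\Rad{x}_i)^2$, and $\alpha_i\ge0$, the maximum over the box is
\begin{align*}
S(d):=\max_{x\in\ivr{x}}\big(f(x)-g(x)\big)=\sum_{i=1}^n\alpha_i(d)\,(\Rad{x}_i)^2,
\end{align*}
where I write $\alpha_i(d)$ to stress the dependence through \nref{alpha2}. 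The goal is then to minimize $S(d)$ over $d>0$.

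First I would discard the positive part. Put $\beta_i(d):=-\tfrac12\big(h_{ii}+\sum_{j\ne i}h_{ij}d_j/d_i\big)$, so that $\alpha_i(d)=\max\{0,\beta_i(d)\}\ge\beta_i(d)$, giving
\begin{align*}
S(d)\ge\sum_{i=1}^n\beta_i(d)\,(\Rad{x}_i)^2=C+T(d),
\end{align*}
where $C:=-\tfrac12\sum_i h_{ii}(\Rad{x}_i)^2$ is independent of $d$ and $T(d):=-\tfrac12\sum_{i\ne j}h_{ij}(d_j/d_i)(\Rad{x}_i)^2$. It thus suffices to show that $T$ is minimized at $d=\Rad{x}$ and that $S(\Rad{x})=C+T(\Rad{x})$; the latter is exactly the hypothesis that the positive part is inactive at $d=\Rad{x}$, so that $\alpha_i(\Rad{x})=\beta_i(\Rad{x})$.

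The key step is minimizing $T$. Writing $a_{ij}:=-h_{ij}\ge0$ (nonnegative since $h_{ij}\le0$ off the diagonal) and substituting $d_i=e^{t_i}$, we get $T=\tfrac12\sum_{i\ne j}a_{ij}(\Rad{x}_i)^2e^{t_j-t_i}$, a nonnegative combination of exponentials of affine functions of $t$, hence \emph{convex} in $t$; a critical point is therefore a global minimizer. The stationarity condition $\partial T/\partial t_k=0$ reads
\begin{align*}
\sum_{i\ne k}a_{ik}(\Rad{x}_i)^2\frac{d_k}{d_i}=\sum_{j\ne k}a_{kj}(\Rad{x}_k)^2\frac{d_j}{d_k}.
\end{align*}
Evaluating at $d=\Rad{x}$ collapses the two sides to $\sum_{i\ne k}a_{ik}(\Rad{x}_i)(\Rad{x}_k)$ and $\sum_{j\ne k}a_{kj}(\Rad{x}_k)(\Rad{x}_j)$, which coincide because $H$ is symmetric, i.e.\ $a_{ik}=a_{ki}$. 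This is the crux and the main obstacle: stationarity at $\Rad{x}$ hinges entirely on the symmetry of the Hessian enclosure, and it is where I would be most careful.

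Combining the pieces, for every $d>0$ we obtain $S(d)\ge C+T(d)\ge C+T(\Rad{x})=S(\Rad{x})$, proving that the maximum separation distance is minimized at $d=\Rad{x}$.
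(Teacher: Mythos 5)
Your proof is correct, and its skeleton matches the paper's: both reduce the maximum separation to $\sum_i\alpha_i(\Rad{x}_i)^2$, drop the positive part to obtain a lower bound whose $d$-dependent part is your $T(d)$, and use the hypothesis to conclude the bound is tight at $d=\Rad{x}$. Where you genuinely diverge is the key minimization step. The paper rescales $d_i=\Rad{x}_ic_i$ and pairs the $(i,j)$ and $(j,i)$ terms to write the objective as $-\frac12\sum_{i<j}h_{ij}\Rad{x}_i\Rad{x}_j\left(c_i/c_j+c_j/c_i\right)$; since $h_{ij}\le0$ and $c_i/c_j+c_j/c_i\ge2$ with equality at $c_i=c_j$, the constant vector $c=1$ minimizes every term simultaneously --- an entirely elementary AM--GM argument with no calculus. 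You instead pass to log-coordinates $d_i=e^{t_i}$, observe that $T$ is a nonnegative combination of exponentials of affine functions (hence convex, a geometric-programming viewpoint), and verify stationarity at $d=\Rad{x}$, which is where symmetry of $H$ enters. Both routes rest on the same two structural facts --- $h_{ij}\le0$ off the diagonal and $h_{ij}=h_{ji}$ --- but deploy them differently: the paper uses symmetry for the pairing and nonpositivity for AM--GM, while you use nonpositivity for convexity and symmetry for the first-order condition. Your route is less ad hoc and would extend to settings where the minimizer cannot be guessed in advance; in particular, the observation that the problem becomes convex in log-coordinates is relevant to the harder auxiliary problem \nref{minDist} studied later in the paper, where the positive part cannot be discarded. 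The paper's route is more elementary and yields the slightly stronger fact that every term is minimized simultaneously. A minor point in your favor: you spell out the final chaining $S(d)\ge C+T(d)\ge C+T(\Rad{x})=S(\Rad{x})$, making explicit that the hypothesis is needed exactly to ensure the lower bound is attained at $\Rad{x}$, a step the paper leaves largely implicit.
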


\begin{proof}
The maximum separation distance can be expressed as
$$
\max \sum_{i=1}^n\alpha_i(\onum{x}_i-x_i)(x_i-\unum{x}_i)\st x\in\ivr{x}.
$$
It is easy to see (cf.\ \cite{SkjWes2012}) that the maximum is attained for $x=\Mid{x}$, so the distance is
$$
\sum_{i=1}^n\alpha_i(\Rad{x_i})^2.
$$
Substituting for $\alpha$ yields a lower bound on this distance
$$
\sum_{i=1}^n-\frac{1}{2}
 \left({h}_{ii}+\sum_{j\not=i}{h}_{ij}\frac{d_j}{d_i}\right)(\Rad{x_i})^2.
$$
To minimize this function over $d>0$, we neglect the absolute term and obtain
$$
-\frac{1}{2}\sum_{i=1}^n \sum_{j\not=i}{h}_{ij}(\Rad{x_i})^2\frac{d_j}{d_i}.
$$
Now, we express $d_i$, $i=1,\dots,n$, as $d_i=\Rad{x_i}c_i$, and have
\begin{align}\label{pfSum}
-\frac{1}{2}\sum_{i=1}^n \sum_{j\not=i}
 {h}_{ij}\Rad{x_i}\Rad{x_j}\frac{c_j}{c_i}
=-\frac{1}{2}\sum_{i<j}{h}_{ij}
   \Rad{x_i}\Rad{x_j}\left(\frac{c_i}{c_j}+\frac{c_j}{c_i}\right).
\end{align}
Notice that $c_i/c_j+c_j/c_i\geq2$ for every $c_i,c_j>0$, and the inequality is satisfied as equation for $c_i=c_j=1$. Thus, $c=1$ is the minimizer of each term of \nref{pfSum}, and therefore it minimizes the whole function as well. Hence, $d=\Rad{x}$ is an optimum for the original problem as well since it is feasible solution that minimizes the lower estimation function.
\end{proof}

\subsection*{Local improvement I.}

There is a non-trivial class of problems satisfying the assumption of Proposition~\ref{thmOptAplhaNeg} --- for instance problems with ${h}_{ii}\leq0$ $\forall i$.

If the assumption of Proposition~\ref{thmOptAplhaNeg} is not satisfied, we can still compute the optimal choice for $d$ by solving an auxiliary optimization problem
\begin{align}\label{minDist}
\min\sum_{i=1}^n\alpha_i(\Rad{x_i})^2
\st \alpha_i\geq0,\ 
2\alpha_i\geq-{h}_{ii}-\sum_{j\not=i}{h}_{ij}\frac{d_j}{d_i},\ 
d_i\geq1 \ \forall i.
\end{align}
To solve it requires some computational effort. Thus, one can think of a local improvement of $d$, which is set up as $d:=\Rad{x}$ at the beginning.
When the assumption of Proposition~\ref{thmOptAplhaNeg} does not hold, then there is at least one dominant diagonal entry. That is, there is $i\in\seznam{n}$ such that
\begin{align}\label{ineqDom}
{h}_{ii}+\sum_{j\not=i}{h}_{ij}\frac{d_j}{d_i}>0.
\end{align}
We say that $i$-th row of $H$ is not saturated.
Decreasing the value of $d_i$ by a sufficiently small amount will preserve $\alpha_i$ to be zero, but possibly decreases values of $\alpha_j$ for $j\not=i$. It is easy to see that the smallest value of $d_i$ we can put is 
\begin{align}\label{putDiImpr}
d_i:=-\frac{1}{{h}_{ii}}\sum_{j\not=i}{h}_{ij}d_j.
\end{align}
If we do this adjusting for each $i\in\seznam{n}$ satisfying \nref{ineqDom}, then it may happen that some new diagonal entry becomes dominant. Thus, we repeat the process until no improvement happens, or until ${h}_{ii}+\sum_{j\not=i}{h}_{ij}d_j/d_i\geq0$ for every $i$; the latter means that each matrix in $\imace{H}$ is positive semidefinite and so $f(x)$ is convex. Each iteration of the presented heuristic is cheap, but may significantly reduce the overestimation of $\alpha_i$s.

\begin{example}
Consider the function
$$
f(x)=5x_1^{}x_2^2+\frac{100}{3}x_1^3-\frac{7}{6}x_2^3,
$$
where $x_1,x_2\in[1,2]$. Its Hessian reads
\begin{align*}
\nabla^2 f(x)=
\begin{pmatrix}
200x_1 & 10x_2 \\ 10x_2 & 10x_1-7x_2
\end{pmatrix}
\end{align*}
and its (tightest) interval enclosure is computed as
\begin{align*}
\nabla^2 f(\ivr{x})\subseteq\imace{H}=
\begin{pmatrix}
[200, 400] & [10,20] \\  [10,20] &  [-4,13]
\end{pmatrix},\ \mbox{ whence }\ 
H=\begin{pmatrix}200 & -20 \\ -20 & -4\end{pmatrix}.
\end{align*}
Choosing $d:=2\Rad{x}=(1,1)^T$, we arrive at the value of $\alpha=(0,12)$.

Now, let us proceed along the local improvement method. Since
$$
{h}_{11}+\sum_{j\not=i}{h}_{1j}\frac{d_j}{d_1}=180\geq0,
$$
we can modify $d_1$ according to \nref{putDiImpr} as $d_1:=0.1$. For the new scaling vector $d=(0.1,1)$ we calculate  $\alpha=(0,3)$. Thus, the improvement in $\alpha_2$ is significant.
\end{example}

As the following example shows, there are however situations, when the improvements are very small and the number of iterations may be potentially infinite.

\begin{example}\label{exOscil}
Consider the matrix
\begin{align*}
H=
\begin{pmatrix}
8& -1& -6\\-1& -2& 0\\-6& 0& 6
\end{pmatrix}.
\end{align*}
Let $\Rad{x}=(1,1,1)^T$. Thus, the initial feasible solution is $d=(1,1,1)^T$, $\alpha=(0,3,0)^T$, and the corresponding objective of \nref{minDist} is 3. Since the first matrix row is not saturated, we may decrease $d_1$ to obtain $d=(\frac{7}{8},1,1)^T$, and the objective value $\frac{23}{8}$. Now, the third row is not saturated, leading to the improvement $d=(\frac{7}{8},1,\frac{7}{8})^T$, and the objective value $2+\frac{7}{8}$. This improvements iterates; in $k$-th iteration we have 
$$
d=\left(\tfrac{1}{2}(1+(\tfrac{3}{4})^k),1,\tfrac{1}{2}(1+(\tfrac{3}{4})^k)\right)^T,
$$
and the corresponding the objective value is $2+\tfrac{1}{2}(1+(\tfrac{3}{4})^k)$. The limit is the vector $d=(\tfrac{1}{2},1,\tfrac{1}{2})^T$ with objective $2+\tfrac{1}{2}$.
\end{example}

\subsection*{Local improvement II.}

Example~\ref{exOscil} motivates us to improve the local improvement method in such a way that several (or all) constraints satisfying \nref{ineqDom} are processed together when decreasing $d_i$ for the other variables.

\begin{proposition}\label{propNonneg}
Suppose that the optimal value to \nref{minDist} is positive.
If there is an optimal solution to \nref{minDist}, then there is one such that
\begin{align}\label{ineqPropNonneg}
{h}_{ii}+\sum_{j\not=i}{h}_{ij}\frac{d_j}{d_i}\leq0,
\quad \forall i=1,\dots,n.
\end{align}
\end{proposition}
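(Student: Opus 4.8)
The plan is to work directly with the point matrix $H$ and with the scale-invariant description of the objective. At any feasible $d>0$ the optimal value of the inner minimization over $\alpha$ in \nref{minDist} equals $\sum_{i=1}^n w_i\alpha_i$, where $w_i:=(\Rad{x_i})^2>0$ and $\alpha_i=\max\{0,-\tfrac12(h_{ii}+\sum_{j\neq i}h_{ij}d_j/d_i)\}$. Since each $\alpha_i$ depends only on the ratios $d_j/d_i$, the whole objective is homogeneous of degree zero in $d$; hence the constraint $d_i\geq1$ only fixes the scale and can be imposed a posteriori by multiplying $d$ by a large constant. I would therefore optimize over $d>0$ and rescale the resulting solution at the very end so that $d_i\geq1$.

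First I would record a first-order optimality fact. Fix an optimal $d$ and split the indices into $P=\{i:\alpha_i>0\}$, $Z=\{i:\alpha_i=0,\ h_{ii}+\sum_{j\neq i}h_{ij}d_j/d_i=0\}$ and $U=\{i:\alpha_i=0,\ h_{ii}+\sum_{j\neq i}h_{ij}d_j/d_i>0\}$; the set $U$ collects exactly the rows that violate \nref{ineqPropNonneg}, and $P\neq\emptyset$ because the optimal value is positive. The key observation is that no row of $U$ can be adjacent to a row of $P$, i.e.\ $h_{im}=0$ whenever $i\in U$ and $m\in P$. Indeed, decreasing a single $d_i$ with $i\in U$ keeps $\alpha_i=0$ (its constraint stays inactive for a small enough step, since $h_{ij}\leq0$ drives the $i$-th expression downward from a strictly positive value), leaves $\alpha_j$ nonincreasing for every $j$ (again because $h_{ji}\leq0$), and strictly decreases $\alpha_m$ for any positive neighbour $m$; this would push the objective below its optimum, which is impossible. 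I expect this single-coordinate perturbation to be the clean engine of the whole argument.

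Next I would use irreducibility of $H$ together with this fact to drive $U$ to the empty set while keeping the objective at its optimum. Starting from the given optimal $d$, I repeatedly apply the saturating update \nref{putDiImpr} to the rows in $U$. By the fact above such a step never affects a row of $P$, so it preserves the objective value; it may, however, push a formerly exactly-saturated neighbour out of $Z$ into $U$, so the update acts only within the $P$-free part of the matrix and advances an \quo{unsaturation front} through it. I would then show that the limit of this objective-preserving process is an optimal $d$ with $U=\emptyset$, i.e.\ one satisfying \nref{ineqPropNonneg}, and finally rescale it so that $d_i\geq1$.

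The hard part will be this limit. As Example~\ref{exOscil} shows, the saturating process need not terminate in finitely many steps, so I must prove that the iterates converge and that the limit is feasible and fully saturated. Two points need care: the limit must stay in the positive orthant (no $d_i\to0$) and the objective must stay bounded (no $\alpha_i\to\infty$). This is exactly where the hypothesis of a positive optimal value is indispensable: a surviving $\alpha_m>0$ anchors the scale and simultaneously rules out the degenerate situation in which every row could be pushed strictly below saturation, which is what makes the conclusion fail when the optimal value is $0$. To make convergence rigorous I would pass to the variables $u_i=\log d_i$, in which each $\alpha_i$ — and hence the objective — is a convex function of $u$; the saturating updates then become a monotone descent on a convex function bounded below by the optimal value, from which I would read off convergence of the $U$-coordinates and exact saturation of the limit.
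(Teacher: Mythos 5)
Your overall route is genuinely different from the paper's, and in outline it is viable. The paper argues in one shot: it takes the set of violated rows, enlarges it by the exactly-saturated rows connected to it, shows the corresponding principal submatrix $H_I$ is an M-matrix (after an infinitesimal perturbation making $H_Id_I^0>a$ strict), and replaces $d_I^0$ by $H_I^{-1}a\le d_I^0$, saturating all rows of $I$ at once; positivity of the new subvector is then proved by contradiction from the standing assumption that $H$ is not block diagonal together with positivity of the optimal value. Your scheme is in effect a Jacobi-type iteration for that same linear system, and your key lemma (that $h_{im}=0$ whenever $i\in U$ and $m\in P$) is a clean first-order fact the paper never needs to state explicitly: the paper gets by with the weaker observation that its substitution cannot increase any $\alpha_j$, hence preserves optimality. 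Your scale-invariance remark also correctly disposes of the constraint $d_i\ge1$ in \nref{minDist}.

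However, the part you yourself flag as hard is where the proposal has genuine gaps, and the tool you propose there would not do the job. First, there is no ``monotone descent'': by your own key lemma the objective is \emph{constant}, equal to the optimal value, along the saturating iterates, and constancy of a convex function along a sequence gives no information about convergence of the sequence, so the log-coordinate convexity is a red herring. What actually gives convergence is simpler: each application of \nref{putDiImpr} strictly decreases the updated coordinate (the row is unsaturated and $h_{ii}>0$ there) and leaves the others unchanged, so $d$ is coordinatewise nonincreasing and bounded below by $0$, hence converges. Second --- and this is the real missing idea --- you never prove that the limit $d^\infty$ stays in the open positive orthant; a surviving $\alpha_m>0$ only pins down the coordinates in $P$ and says nothing about coordinates outside $P$ collapsing to $0$. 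The needed argument, which occupies the corresponding portion of the paper's proof, runs as follows: if $d^\infty_i=0$ exactly for $i\in I^0$, then passing to the limit in the update equations gives $\sum_{j\notin I^0}h_{ij}d^\infty_j=0$ for every $i\in I^0$; since each summand is $\le0$ and $d^\infty_j>0$ off $I^0$, all entries $h_{ij}$ with $i\in I^0$, $j\notin I^0$ vanish, so $H$ is block diagonal (note $I^0\neq\seznam{n}$ because $P\neq\emptyset$ by the positive-optimal-value hypothesis), contradicting the paper's standing assumption. Finally, you must fix a fair update order (say, saturate every currently unsaturated row in each sweep) so that continuity forces every row of the limit to satisfy \nref{ineqPropNonneg}. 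With these repairs your proof closes, but as written the convergence mechanism is wrong and the positivity of the limit is asserted rather than proved.
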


\begin{proof}
Let $d^0,\alpha^0$ be an optimal solution, and \nref{ineqPropNonneg} be violated for $i\in I\subseteq\seznam{n}$. Now, we extend (iteratively) the index set $I$ of those $i$ for which
\begin{align*}
{h}_{ii}+\sum_{j\not=i}{h}_{ij}\frac{d^0_j}{d^0_i}=0,
\quad \forall i=1,\dots,n,
\end{align*}
and there is $j\in I$ such that ${h}_{ij}\not=0$. Notice that for $i\in I$ we have $\alpha^0_i=0$.

Let $H_I$ be the submatrix of $H$ when restricted to the rows and columns indexed by $I$, and similarly let $d_I$ be the subvector of $d$ indexed by $I$. Denoting $a$ the vector with entries
$$
-\sum_{j\not\in I}{h}_{ij}d^0_j, \quad i\in I,
$$ 
we have
$$
H_Id^0_I\geq a\geq 0.
$$
If $H_Id^0_I>a$ in addition, then $H_I$ in an M-matrix, and so it has an entrywise nonnegative inverse. Denote by $d^*$ the vector $d^0$ for which $d^0_I$ is replaced by the solution of the linear system $H_Id_I=a$. By the above observation, $d_I^*$ is nonnegative and satisfies
$$
d^0_I\geq H_I^{-1}a=d_I^*.
$$
That is, the corresponding $\alpha_i$-s for $d^*$ remain zero for $i\in I$, and the other $\alpha_i$-s either decrease or remain the same. It would be wrong if some entry of $d_I^*$ is zero, but we will show that it cannot happen. Suppose to the contrary that $d^*_i=0$ for $i\in I^0\subseteq I$. Since $d^*$ satisfies the equations
$$
{h}_{ii}d^*_i+\sum_{j\not=i}{h}_{ij}d^*_j=0,
\quad \forall i\in I^0,
$$
we obtain
$$
\sum_{j\not\in I}{h}_{ij}d^*_j=0,
\quad \forall i\in I^0,
$$
from which  ${h}_{ij}=0$, $\forall i\in I^0$, $	\forall j\not\in I^0$. Thus, $H$ is either block diagonal, or positive semidefinite (when $d^*=0$), which contradicts our assumption.

If $H_Id^0_I>a$ is not satisfied, yet the strict inequality still holds for at least one index $i$ due to the assumption. Decreasing $d^0_i$ by a sufficiently small amount for every such $i$ will not violate the strict inequality, but it causes the equations in $H_Id^0_I\geq a$ to hold as strict inequalities. Thus, $H_I$ is an M-matrix, and the previous results is valid here, too. 
\end{proof}

Notice that the assumption of Proposition~\ref{propNonneg} on positivity of the optimal value to \nref{minDist} very weak. Indeed, it is satisfied for every $\imace{H}$ containing at least one matrix that is not positive semidefinite.

Further, its proof gives rise to another local improvement algorithm: Let $d$ be a feasible solution to \nref{minDist}, identify $I$, set up $H_I$ and $a$ and update $d_I$ by $H_I^{-1}a$. The scheme of the method is given in Algorithm~\ref{alg2}; the inequalities in expressions are understood entry-wise.

\begin{algorithm}[ht]\label{alg2}
\caption{(Local improvement II.)}
\begin{algorithmic}[1]
\STATE
Put $d:=\Rad{x}$;
\WHILE{\textbf{not} ($Hd\leq0$ \textbf{or} $Hd\geq0$)}
\STATE
identify $I$; 
\STATE
put $a=(-\sum_{j\not\in I}{h}_{ij}d^0_j)_{i\in I}$;
\STATE
put $d_I:=H_I^{-1}a$;
\ENDWHILE
\RETURN{$d$};
\end{algorithmic}
\end{algorithm}

The update of $d_I$ may cause that other row of $H$ becomes unsaturated and enters $I$. This is why more than one iteration is needed in general. However, contrary to the previous method, this situation can happen at most $(n-1)$-times, which gives an upper bound on the number of iterations.
The following Examples~\ref{exOscil2}--\ref{exLin} respectively show that the performance to the first local improvement method is much higher, but the maximum number of $n-1$ iterations may be sometimes attained.

\begin{example}\label{exOscil2}
For the data of Example~\ref{exOscil} with the initial point $d=(1,1,1)^T$, we set up $I:=\{1,3\}$, and update 
$d_I:=\left(\begin{smallmatrix}8&-6\\-6&6\end{smallmatrix}\right)^{-1}
 \left(\begin{smallmatrix}1\\0\end{smallmatrix}\right)
=(0.5,0.5)^T$. Thus, we get the optimum $d^*=(0.5,1,0.5)^T$ in only one iteration.
\end{example}

\begin{example}\label{exLin}
Consider a tridiagonal matrix $H\in\R^{n\times n}$, where 
$$
h_{11}=2,\quad h_{nn}=0,\quad
h_{kk}=4\frac{2^{2k-2}-1}{2^{2k-1}-1}\ \ \forall k=2,\dots,n-1,
$$
and $h_{ij}=-1$ for $|i-j|=1$.
Let $d=(1,\dots,1)^T$.
Then in the first iteration, $d_1$ is updated to $d_1=\frac{1}{2}$. In iteration $k$, we have $I=\{k\}$, and $d_k$ is updated to $1-2^{1-2k}$. We prove it by induction since $d_{k+1}$ is calculated as
\begin{align*}
d_{k+1}&:=-\frac{d_kh_{k+1,k}+d_{k+2}h_{k+1,k+2}}{h_{k+1,k+1}}
=\frac{d_k+1}{h_{k+1,k+1}}\\
&=\frac{2-2^{1-2k}}{4}\cdot\frac{2^{2k+1}-1}{2^{2k}-1}
=2^{-2k-1}(2^{2k+1}-1)
=1-2^{1-2(k+1)}.
\end{align*}
This makes $k+1$ enter to $I$ because of the inequality
$$
h_{k+1,k+1}d_{k+1}+h_{k+1,k}d_k+h_{k+1,k+2}d_{k+2}\geq0,
$$
which takes the form
$$
4\frac{2^{2k}-1}{2^{2k+1}-1}-(1-2^{1-2k})-1\geq0.
$$
Multiplying by $(2^{2k+1}-1)/2$ we get an easy-to-see inequality
\begin{equation*}
2(2^{2k}-1)-2(1-2^{-2k})(2^{2k+1}-1)/2=
2(2^{2k}-1)-(-1+2^{2k+1}+2^{-2k}-2)\geq0.
\end{equation*}
\end{example}

\begin{example}\label{exTest}
We did some small experiments about what is the number of iterations. We generated the entries of $H\in\R^{n\times n}$ randomly as integers in $[-10,10]$ with uniform distribution. Then the diagonal of $H$ was increased by $n$. Table~\ref{tabAv} displays the average number and the maximal number of iterations as the mean of 10000 runs. We considered only such matrices for which there is at least one iteration needed (the others were skipped), that is, $H$ is not positive semidefinite and has at least one unsaturated row.

Next, the table also show similar characteristics for tridiagonal matrices. Herein, the tridiagonal entries are generated in the same manner, but without the diagonal increment. These matrices have almost the same behavior as the general ones.

The table shows that for the randomly generated matrices the number of iterations is never larger than three, and mostly it is only one.

\begin{table}[t]
\caption{(Example~\ref{exTest}) Number of iterations for random matrices.\label{tabAv}}
\begin{center}
\begin{tabular}{rcccc}
 \toprule
$n$ & \multicolumn{2}{c}{general} &\multicolumn{2}{c}{tridiagonal}\\
\cmidrule(lr){2-3}
\cmidrule(lr){4-5}
 & {average} & {maximal} & {average} & {maximal} \\
\midrule 
 3 & 1.0587  & 2 & 1.0254 &2 \\
 5 & 1.1491  & 3 & 1.0499 &2 \\
10 & 1.0506  & 3 & 1.0751 &3 \\
15 & 1.0063  & 2 & 1.0941 &3 \\
20 & 1.0009  & 2 & 1.1285 &3 \\
\bottomrule
\end{tabular}
\end{center}
\end{table}
\end{example}

\subsection*{Optimality conditions}

By Proposition~\ref{propNonneg}, we can rewrite \nref{minDist} as 
\begin{align*}
\min\sum_{i=1}^n\alpha_i(\Rad{x_i})^2
\st \alpha_i\geq0,\ 
2\alpha_i\geq-{h}_{ii}-\sum_{j\not=i}{h}_{ij}\frac{d_j}{d_i}\geq0,\ 
d_i\geq1 \ \forall i,
\end{align*}
or,
\begin{align}\label{minDist2}
\min\frac{1}{2}\sum_{i=1}^n(\Rad{x_i})^2
\left(-{h}_{ii}-\sum_{j\not=i}{h}_{ij}\frac{d_j}{d_i}\right)
\st
-\sum_{j=1}^n{h}_{ij}d_j\geq0,\ 
d_i\geq1 \ \forall i.
\end{align}
This formulation reveals new properties. We state three necessary optimality conditions below.

In the remainder of the paper, we will without loss of generality assume that $\Rad{x}=1$; otherwise we proceed as in the proof of Proposition~\ref{thmOptAplhaNeg}. 

For a positive vector $d$, we define
$$
I^*=I^*(d):=\{i\in\seznam{n}\mmid
 -\textstyle\sum_{j=1}^n{h}_{ij}d_j>0\}.
$$

\begin{proposition}\label{propDEqual}
Let $d$ be an optimal solution to \nref{minDist2}.
Then $d_i=c\Rad{x}_i$, $i\in I^*$, for some constant~$c$.
\end{proposition}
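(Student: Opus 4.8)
The plan is to work with the equivalent formulation \nref{minDist2} under the normalization $\Rad{x}=1$ already adopted in the paper. First I would strip the objective of its constant part $-\frac12\sum_i h_{ii}$ and, using the symmetry $h_{ij}=h_{ji}$ together with $-h_{ij}\ge0$ for $i\ne j$, rewrite the part depending on $d$ as
$$
\phi(d)=\frac12\sum_{i<j}(-h_{ij})\left(\frac{d_i}{d_j}+\frac{d_j}{d_i}\right),
$$
exactly as in the proof of Proposition~\ref{thmOptAplhaNeg}. Two features of $\phi$ drive the argument: it is invariant under the global scaling $d\mapsto cd$, and every summand is built from the strictly convex function $t\mapsto t+1/t$, whose unique minimum is at $t=1$. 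Hence a summand indexed by $i<j$ with $-h_{ij}>0$ strictly decreases when the ratio $d_i/d_j$ is moved toward $1$.

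Next I would set up a perturbation argument around an optimal $d$. By Proposition~\ref{propNonneg} we may assume $-\sum_j h_{ij}d_j\ge0$ for every row, so $I^*=I^*(d)$ is exactly the set of \emph{slack} rows (strict inequality), while on $\bar I:=\seznam{n}\setminus I^*$ the constraint holds with equality (the \emph{saturated} rows). Because the rows in $I^*$ are slack, a sufficiently small two-sided perturbation of the coordinates $d_i$, $i\in I^*$, preserves their feasibility; the only obstructions are the saturated rows in $\bar I$ and the active lower bounds $d_i\ge1$. The idea is then: if $d_p\ne d_q$ for some $p,q\in I^*$, produce an admissible perturbation that pushes the relevant ratios toward $1$, strictly lowering $\phi$ and contradicting optimality.

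To realize such perturbations I would eliminate the saturated block. On the face where all rows of $\bar I$ remain tight, the equations $\sum_j h_{ij}d_j=0$ ($i\in\bar I$) determine $d_{\bar I}$ from $d_{I^*}$: writing $H_{\bar I\bar I}$ for the principal submatrix and arguing as in the proof of Proposition~\ref{propNonneg} that it is a nonsingular M-matrix, one gets $d_{\bar I}=-H_{\bar I\bar I}^{-1}H_{\bar I I^*}d_{I^*}$, a nonnegative, linear, homogeneous map of $d_{I^*}$. Substituting back yields a reduced objective $\Psi(d_{I^*})=\phi\big(d_{I^*},d_{\bar I}(d_{I^*})\big)$ inheriting both the scale invariance and the convex building blocks, whose minimization over $d_{I^*}\ge1$ (with the $I^*$ rows slack near the optimum) is equivalent to the original problem on this face. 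It then remains to show that $\Psi$ is minimized only when $d_{I^*}$ is a constant multiple of $\mathbf{1}$, which I would attempt by an exchange step: replace two unequal coordinates of $d_{I^*}$ by a common value chosen so as to preserve the linear relations transmitted through $d_{\bar I}(\cdot)$, and read off a strict decrease of $\Psi$ from the strict convexity of $t+1/t$.

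The step I expect to be the main obstacle is precisely this last bookkeeping. The cross terms coupling $I^*$ to $\bar I$ do not individually favor equal values of the $d_i$ --- each such term would prefer $d_i$ to match its own saturated neighbor --- so a naive coordinatewise averaging can fail, and one easily lands on a stationary point of the wrong active set (as is seen when a supposedly saturated row is in fact slack at the true optimum). The delicate point is to choose the perturbation direction so that it simultaneously (i) keeps all saturated rows feasible, using the monotonicity encoded in $H_{\bar I\bar I}^{-1}\ge0$ and the fact that $H$ is not block diagonal, which forces a genuine coupling between $I^*$ and $\bar I$, and (ii) respects the active bounds $d_i\ge1$, while still producing a net decrease of $\phi$. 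Equivalently, one must show that the Karush--Kuhn--Tucker sign conditions ($\lambda_k\ge0$ on the saturated rows, $\mu_i\ge0$ on the active bounds) are consistent with the stationarity relations $\partial\phi/\partial d_m=\sum_{k\in\bar I}\lambda_k(-h_{km})+\mu_m$ for $m\in I^*$ only when all $d_i$, $i\in I^*$, coincide. This is the crux, where the global structure of the optimum, rather than any single convex summand, must be invoked.
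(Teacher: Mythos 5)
Your setup is sound and matches the paper's starting point: the normalization $\Rad{x}=1$, the ratio form of the objective, the convexity of $t\mapsto t+1/t$, the split into slack rows $I^*$ and saturated rows, and the elimination of the saturated block via the nonsingular M-matrix $H_{\bar I\bar I}$. But the proposal stops exactly where the proof has to begin: the exchange/averaging step that would turn ``$d_p\ne d_q$ for some $p,q\in I^*$'' into a contradiction is never carried out, and you yourself flag it as the unresolved crux. As written, this is a genuine gap, not a complete proof --- the KKT consistency analysis you sketch at the end is a restatement of what must be shown, not an argument for it.

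The idea that closes this gap in the paper is not a pairwise exchange but a one-sided perturbation of the \emph{maximal} coordinate. One may assume there is a unique $k\in I^*$ with $d_k>d_i$ for all $i\in I^*\setminus\{k\}$: if the maximum is attained on a set $K$, the tied indices are merged into a single index $0$ of an auxiliary matrix $H'$ (with $h'_{i0}=\sum_{k\in K}h_{ik}$ and $h'_{00}=\sum_{j,k\in K,\,j\ne k}h_{jk}$), which leaves the objective structure intact. Then set $\tilde d_k:=(1-\eps)d_k$ and restore saturation of the rows outside $I^*$ by the procedure of Proposition~\ref{propNonneg}; those coordinates only decrease and stay positive. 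The resulting drop in the objective splits as $A+B$. The part $A$, collecting terms between $k$ and $I^*\setminus\{k\}$, is strictly positive because $(1-\eps)d_k^2>d_j^2$ and $h_{kj}\le0$; maximality of $d_k$ means every ratio $d_j/d_k$ is pushed toward $1$ simultaneously, so no averaging over pairs is needed. The coupling part $B$ --- precisely the bookkeeping you identify as the obstacle, since those terms individually prefer $d_k$ to match saturated neighbors --- is controlled not by sign analysis of multipliers but by the saturation identities $\sum_i h_{ij}d_i=\sum_i h_{ij}\tilde d_i=0$ for $j\notin I^*$, which make the net cross contribution telescope to zero, yielding $B\ge0$. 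Without this (or an equivalent) mechanism for the cross terms, your outline cannot be completed as written.
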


\begin{proof}
Since we assume $\Rad{x}=1$, we have to show that $d_i$ are the same for all $i\in I^*$.

Suppose to the contrary that this is not the case. For the sake of simplicity assume that there is $k\in I^*$ such that $d_k>d_i$ $\forall i\in I^*\setminus\{k\}$. If the dominant value is not unique, we will gather them in the following way. Let $K\subsetneqq I^*$ be the index set of these dominant values and define a new matrix $H'$ of size $n-|K|+1$, indexed by $0$ and $i\in K':=\seznam{n}\setminus K$, and having entries
\begin{align*}
h'_{ij}&=h_{ij},\quad i,j\in K',\\
h'_{i0}&=h_{0i}=\sum_{k\in K}{h}_{ik},\quad i\in K',\\
h'_{00}&=\sum_{j,k\in K,j\not=k}{h}_{jk}.
\end{align*}
Omitting the absolute terms and the multiplicative constant, the objective function of \nref{minDist2} reads
\begin{align*}
\sum_{i=1}^n\sum_{j\not=i}{h}_{ij}\frac{d_j}{d_i}
&=\sum_{i\not\in K}\sum_{j\not\in K,j\not=i}{h}_{ij}\frac{d_j}{d_i}
 +\sum_{i\not\in K}\sum_{j\in K}{h}_{ij}\frac{d_j}{d_i}
 +\sum_{i\in K}\sum_{j\not\in K}{h}_{ij}\frac{d_j}{d_i}
 +\sum_{i\in K}\sum_{j\in K,j\not=i}{h}_{ij}\frac{d_j}{d_i}\\
&=\sum_{i\not=0}\sum_{j\not=0,j\not=i}h'_{ij}\frac{d'_j}{d'_i}
 +\sum_{i\not=0}h'_{i0}\frac{d'_0}{d'_i}
 +\sum_{j\not=0}{h}_{0j}\frac{d'_j}{d'_0}
 +h'_{00}
=\sum_{i\in K'\cup\{0\}}\sum_{j\not=i}h'_{ij}\frac{d'_j}{d'_i}+h'_{00},
\end{align*}
where $d'_i=d_i$, $i\in K'$, and $d'_0=d_k$ for any $k\in K$.
This reduces the problem to one with the matrix $H'$, the feasible solution $d'$, and satisfying the uniqueness condition.

Let $\eps>0$ be sufficiently small, and consider a variation $\tilde{d}$ of $d$ defined as 
$\tilde{d}_k:=(1-\eps)d_k$ and $\tilde{d}_i:=d_i$ for $i\not=k$. Since some rows of $H$ may not be saturated now, we apply the procedure of the proof of Proposition~\ref{propNonneg} to obtain a solution satisfying \nref{ineqPropNonneg}; the procedure concerns only $\tilde{d}_i$, $i\not\in I^*$.
For the same reason as in the proof of Proposition~\ref{propNonneg} the entries of $\tilde{d}$ remain positive, and $\tilde{d}_i\leq d_i$ $\forall i\not\in I^*$.

We show that the objective value in $\tilde{d}$ is smaller than in $d$ by evaluating their (double) difference
\begin{align*}
\sum_{i}\left(-{h}_{ii}-\sum_{j\not=i}{h}_{ij}\frac{d_j}{d_i}\right)
-\sum_{i}
 \left(-{h}_{ii}-\sum_{j\not=i}{h}_{ij}\frac{\tilde{d}_j}{\tilde{d}_i}\right)
=\sum_{i\in I^*}\sum_{j\not=i}
{h}_{ij}\left(\frac{\tilde{d}_j}{\tilde{d}_i}-\frac{d_j}{d_i}\right).
\end{align*}
We will split this expression into a sum $A+B$ and show nonnegativity of both sub-expressions $A$ and $B$. The first one,
\begin{align*}
A
&:=\sum_{i=k}\sum_{j\in I^*}
{h}_{ij}\left(\frac{\tilde{d}_j}{\tilde{d}_i}-\frac{d_j}{d_i}\right)
+\sum_{i\in I^*\setminus\{k\}}\sum_{j=k}
{h}_{ij}\left(\frac{\tilde{d}_j}{\tilde{d}_i}-\frac{d_j}{d_i}\right)\\
&=\sum_{j\in I^*\setminus\{k\}}
{h}_{kj}\left(\frac{d_j}{(1-\eps)d_k}-\frac{d_j}{d_k}\right)
+\sum_{i\in I^*\setminus\{k\}}
{h}_{ik}\left(\frac{(1-\eps)d_k}{d_i}-\frac{d_k}{d_i}\right)\\
&=\sum_{j\in I^*\setminus\{k\}} {h}_{kj}
  \frac{d_j}{d_k}\cdot\frac{\eps}{1-\eps}
 -\sum_{j\in I^*\setminus\{k\}} \eps {h}_{kj}\frac{d_k}{d_j}\\
&=\sum_{j\in I^*\setminus\{k\}} \eps{h}_{kj}\left(
 \frac{d_j}{d_k}\cdot\frac{1}{1-\eps}
 -\frac{d_k}{d_j}\right).
\end{align*}
By definition, $d_k>d_j$ $\forall j\in I^*\setminus\{k\}$, whence $(1-\eps)d_k^2>d_j^2$. This implies $\frac{d_j}{d_k}\cdot\frac{1}{1-\eps}
 <\frac{d_k}{d_j}$, and due to ${h}_{kj}\leq0$ we get $A>0$. Notice that the case $A=0$ cannot happen since $H$ is not block diagonal.

Now, the remainder reads
\begin{align*}
B&:=
\sum_{i=k}\sum_{j\not\in I^*}
 {h}_{ij}\left(\frac{\tilde{d}_j}{\tilde{d}_i}-\frac{d_j}{d_i}\right)
+\sum_{i\in I^*\setminus\{k\}}\sum_{j\not=k}
{h}_{ij}\left(\frac{\tilde{d}_j}{\tilde{d}_i}-\frac{d_j}{d_i}\right)\\
&=\sum_{j\not\in I^*}
 {h}_{kj}\left(\frac{\tilde{d}_j}{(1-\eps)d_k}-\frac{d_j}{d_k}\right)
+\sum_{i\in I^*\setminus\{k\}}\sum_{j\not\in I^*}
{h}_{ij}\left(\frac{\tilde{d}_j}{d_i}-\frac{d_j}{d_i}\right)\\
\end{align*}
Using the facts that
\begin{itemize}
\item
$(1-\eps)d_k^2>d_i^2$ for every $i\in I^*\setminus\{k\}$, 
\item
$ {h}_{ij}(\tilde{d}_j-d_j)\geq0$ for every $j\not\in I^*$, 
\end{itemize}
we derive
\begin{align*}
(1-\eps)d_k^2 B 
&\geq\sum_{j\not\in I^*}
  {h}_{kj}d_k(\tilde{d}_j-(1-\eps)d_j)
 +\sum_{j\not\in I^*}\sum_{i\in I^*\setminus\{k\}}
  {h}_{ij}d_i(\tilde{d}_j-d_j)\\
&=\sum_{j\not\in I^*}\sum_{i\in I^*} {h}_{ij}d_i\tilde{d}_j
 -\sum_{j\not\in I^*}\left({h}_{kj}(1-\eps)d_kd_j+
   \sum_{i\in I^*\setminus\{k\}} {h}_{ij}{d}_i d_j\right)\\
&=\sum_{j\not\in I^*}\sum_{i\in I^*} {h}_{ij}d_i\tilde{d}_j
 -\sum_{j\not\in I^*}\sum_{i\in I^*} {h}_{ij}\tilde{d}_i d_j.
\end{align*}
Since
\begin{align*}
\sum_{i=1}^nh_{ji}d_i=\sum_{i=1}^nh_{ji}\tilde{d}_i=0
\end{align*}
for any $j\not\in I^*$, we can write the last expression as
\begin{align*}
\sum_{j\not\in I^*}\sum_{i\in I^*} {h}_{ij}d_i\tilde{d}_j
 +\sum_{j\not\in I^*}\sum_{i\not\in I^*} {h}_{ij}\tilde{d}_i d_j
=\sum_{j\not\in I^*}\sum_{i=1}^n {h}_{ij}d_i\tilde{d}_j
=\sum_{j\not\in I^*}0\tilde{d}_j=0,
\end{align*}
which completes the proof.
\end{proof}

\begin{proposition}\label{propInewInonI}
Let $d$ be an optimal solution to \nref{minDist2}.
Then $d^{}_i\Rad{x}_i\leq d^{}_j\Rad{x}_j$ for every $i\not\in I^*$ and $j\in I^*$.
\end{proposition}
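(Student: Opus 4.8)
The plan is to argue by contradiction, producing a feasible variation of an optimal $d$ that strictly lowers the objective $\phi$ of \nref{minDist2}. With $\Rad{x}=1$, Proposition~\ref{propDEqual} gives an optimum $d$ with $d_j=c$ for all $j\in I^*$, so the assertion reads $d_i\le c$ for $i\not\in I^*$; suppose instead $d_{i_0}>c$ for some fixed $i_0\not\in I^*$ (no maximality or tie-breaking will be needed). Write $C:=\seznam{n}\setminus I^*$ and $C':=C\setminus\{i_0\}$; every row of $C$ is active, $\sum_j h_{ij}d_j=0$, which forces $h_{ii}>0$ on $C$ (a zero row would isolate $i$, contradicting non-block-diagonality). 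I would lower $d_{i_0}$ to $(1-\eps)d_{i_0}$ and re-saturate the rows of $C'$ exactly as in the proof of Proposition~\ref{propNonneg}, replacing $d_{C'}$ by $d_{C'}-\eps\delta$, where $\delta$ keeps the $C'$-rows active. The bound $d_i\ge1$ may be ignored: $\phi$ and the cone $\{Hd\le0\}$ are invariant under positive scaling, so any improving $\tilde d>0$ with $H\tilde d\le0$ can afterwards be rescaled up to meet $d\ge1$ without changing its value.

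Since $\tilde d$ and $d$ agree on $I^*$ and are both active on $C'$, the active-row identity makes every $i\in C'$ term cancel, and evaluating $\phi$ directly leaves
\begin{align*}
\phi(\tilde d)-\phi(d)
 = \frac{1}{2c}\sum_{i\in I^*}\sum_{j} h_{ij}(d_j-\tilde d_j)
 + \frac{1}{2\tilde d_{i_0}}\Big(-\sum_{j} h_{i_0 j}\tilde d_j\Big).
\end{align*}
Setting $u:=d-\tilde d\ge0$ (supported on $\{i_0\}\cup C'$) and $P_j:=\sum_{k\in C}h_{jk}d_k$, the first sum equals $-\tfrac{1}{2c^2}\sum_{j\in C}u_jP_j$, using the active-row identities $c\sum_{i\in I^*}h_{ij}=-P_j$ on $C$ together with the symmetry $h_{ij}=h_{ji}$, while the bracket in the second sum equals $\sum_{j}h_{i_0 j}u_j$.

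The heart of the argument is then a single identity. Introduce $w_C$ on $C$ with $w_{i_0}:=d_{i_0}$ and $w_{C'}:=\delta$; the re-saturation equations say precisely that $H_{CC}w_C=Q\,e_{i_0}$, where $Q:=\sum_{k\in C}h_{i_0 k}w_k=h_{i_0 i_0}d_{i_0}+\sum_{k\in C'}h_{i_0 k}\delta_k$. Since $P_j=(H_{CC}d_C)_j$ and $H_{CC}$ is symmetric,
\begin{align*}
\sum_{j\in C}w_jP_j=w_C^{T}H_{CC}d_C=(H_{CC}w_C)^{T}d_C=Q\,d_{i_0}.
\end{align*}
Substituting $u=\eps w$ and $\sum_j h_{i_0 j}u_j=\eps Q$ gives, to leading order in $\eps$,
\begin{align*}
\phi(\tilde d)-\phi(d)=\frac{\eps Q}{2}\Big(\frac{1}{d_{i_0}}-\frac{d_{i_0}}{c^2}\Big)+O(\eps^2)
 =\frac{\eps Q}{2d_{i_0}c^2}\,(c^2-d_{i_0}^2)+O(\eps^2),
\end{align*}
which is strictly negative for small $\eps>0$ whenever $Q>0$ and $d_{i_0}>c$, contradicting optimality and proving the claim.

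The main obstacle is the positivity bookkeeping that underlies this clean cancellation. I must show that $H_{CC}$ and its principal submatrix $H_{C'C'}$ are nonsingular M-matrices, so that the re-saturation $\delta=H_{C'C'}^{-1}(|h_{\cdot i_0}|d_{i_0})$ exists and is nonnegative, and so that $H_{CC}^{-1}\ge0$ with positive diagonal; the latter yields $Q>0$ from $d_{i_0}=w_{i_0}=Q\,(H_{CC}^{-1})_{i_0 i_0}$. These are precisely the structural facts exploited in Proposition~\ref{propNonneg} — a $Z$-matrix admitting a positive vector mapped to a nonnegative one is an M-matrix, here witnessed by $H_{CC}d_C=P_C\ge0$ — and non-block-diagonality of $H$ excludes the singular case. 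Checking feasibility of $\tilde d$ for small $\eps$ (the $I^*$-rows stay strictly slack, row $i_0$ becomes slack because $Q>0$, and $\tilde d>0$) is then routine.
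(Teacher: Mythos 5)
Your proof is correct, and its skeleton coincides with the paper's: argue by contradiction, use Proposition~\ref{propDEqual} to reduce the claim to $d_{i_0}\le c$, perturb the offending coordinate to $(1-\eps)d_{i_0}$, re-saturate the remaining rows of $C'=C\setminus\{i_0\}$ via the M-matrix machinery from the proof of Proposition~\ref{propNonneg}, and show that the objective of \nref{minDist2} strictly decreases. The genuine difference lies in how the decrease is established. The paper splits the doubled difference into two double sums $A$ and $B$, and its strictness step rests on the claim $\sum_j h_{kj}\tilde{d}_j<0$, justified only by the remark that not all $h_{kj}$, $j\ne k$, vanish; as stated this is insufficient, because that sum also contains the nonnegative diagonal term $h_{kk}\tilde{d}_k$ (in your notation $\sum_j h_{kj}\tilde{d}_j=-\eps Q$, and $Q$ is a priori a difference of a nonnegative and a nonpositive quantity). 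You instead encode the re-saturation as the linear system $H_{CC}w_C=Q\,e_{i_0}$ and use symmetry exactly once, via $\sum_{j\in C}w_jP_j=w_C^TH_{CC}d_C=(H_{CC}w_C)^Td_C=Q\,d_{i_0}$, to obtain the exact first-order formula $\phi(\tilde{d})-\phi(d)=\tfrac{\eps Q}{2}\bigl(\tfrac{1}{(1-\eps)d_{i_0}}-\tfrac{d_{i_0}}{c^2}\bigr)$; crucially, the relation $d_{i_0}=Q\,(H_{CC}^{-1})_{i_0i_0}$ together with inverse-positivity of the nonsingular M-matrix $H_{CC}$ (nonsingularity excluded from failing by non-block-diagonality, exactly as in Proposition~\ref{propNonneg}) yields $Q>0$, which at one stroke gives feasibility of $\tilde{d}$ at row $i_0$ and the strict decrease. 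You also dispose of the constraint $d\ge1$ by scale invariance of \nref{minDist2}, a point the paper passes over silently. So your route is the same perturbation argument, but with a cleaner linear-algebraic decomposition that actually closes the two small gaps (positivity of $Q$, feasibility of the perturbed point) that the paper's double-sum computation glosses over.
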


\begin{proof}
Since we assume $\Rad{x}=1$, we have to show that $d^{}_i\leq d^{}_j$ for every $i\not\in I^*$ and $j\in I^*$.

Suppose to the contrary that there is $k\not\in I^*$ such that $d_k>d_i$, $i\in I^*$. 

Let $\eps>0$ be sufficiently small, and consider a variation $\tilde{d}$ of $d$ defined as 
$\tilde{d}_k:=(1-\eps)d_k$ and $\tilde{d}_i:=d_i$ for $i\not=k$. Since some rows of $H$ may not be saturated now, we apply the procedure of the proof of Proposition~\ref{propNonneg} again to obtain a solution satisfying \nref{ineqPropNonneg}; the procedure concerns only $\tilde{d}_i$, $i\not\in I^*\cup\{k\}$. Notice that positivity of $\tilde{d}$ is ensured since principal submatrix of an M-matrix is again an M-matrix for the same reason as in the proof of Proposition~\ref{propNonneg}.

Recall that by Proposition~\ref{propDEqual}, we have $d_i=\,$const for $i\in I^*$.
We show that the objective in $\tilde{d}$ is smaller than in $d$ by evaluating their (double) difference.
\begin{align*}
\sum_{i}\left(-{h}_{ii}-\sum_{j\not=i}{h}_{ij}\frac{d_j}{d_i}\right)
-\sum_{i}
 \left(-{h}_{ii}-\sum_{j\not=i}{h}_{ij}\frac{\tilde{d}_j}{\tilde{d}_i}\right)
=\sum_{i\in I^*\cup\{k\}}\sum_{j\not=i}
{h}_{ij}\left(\frac{\tilde{d}_j}{\tilde{d}_i}-\frac{d_j}{d_i}\right)
=\frac{1}{d_i^2}(A+B),
\end{align*}
where
\begin{align*}
A&:=d_i^2\sum_{i\in I^*}\sum_{j\not=i}
  {h}_{ij}\left(\frac{\tilde{d}_j}{\tilde{d}_i}-\frac{d_j}{d_i}\right)
 =d_i^2\sum_{i\in I^*}\sum_{j\not\in I^*}
  {h}_{ij}\left(\frac{\tilde{d}_j}{\tilde{d}_i}-\frac{d_j}{d_i}\right)\\
 &=d_i^2\sum_{i\in I^*}\sum_{j\not\in I^*}
  {h}_{ij}\left(\frac{\tilde{d}_j}{d_i}-\frac{d_j}{d_i}\right)
 = \sum_{j\not\in I^*}\left(\tilde{d}_j\sum_{i\in I^*} {h}_{ij}d_i
    -d_j\sum_{i\in I^*} {h}_{ij}\tilde{d}_i\right)\\
 &=\sum_{j\not\in I^*}\left(\tilde{d}_j\sum_{i\in I^*} {h}_{ij}d_i
    +d_j\sum_{i\not\in I^*} {h}_{ij}\tilde{d}_i\right)
    -d_k\sum_{i\not\in I^*} {h}_{ik}\tilde{d}_i
    -d_k\sum_{i\in I^*} {h}_{ik}\tilde{d}_i\\
 &=\sum_{j\not\in I^*}\tilde{d}_j\sum_{i=1}^n {h}_{ij}d_i
    -d_k\sum_{i=1}^n {h}_{ij}\tilde{d}_i
 = -d_k\sum_{i=1}^n {h}_{ij}\tilde{d}_i.
\end{align*}
We used the facts that $\sum_{i=1}^n {h}_{ij}\tilde{d}_i=0$ $\forall j\not\in I^*\cup\{k\}$ and $\sum_{i=1}^n {h}_{ij}d_i=0$ $\forall j\not\in I^*$.
The second term in the above expression reads
\begin{align*}
B&:=d_i^2\sum_{j\not=k}
  {h}_{kj}\left(\frac{\tilde{d}_j}{\tilde{d}_k}-\frac{d_j}{d_k}\right)
 =d_i^2\left(\sum_{j\not=k}
  {h}_{kj}\frac{\tilde{d}_j}{\tilde{d}_k}-h_{kk}\right)\\
&=\frac{d_i^2}{\tilde{d}_k}\sum_{j=1}^n {h}_{kj}\tilde{d}_j
 >d_k\sum_{j=1}^n {h}_{kj}\tilde{d}_j.
\end{align*}
The strict inequality follows from $d_i^2<d_k\tilde{d}_k=(1-\eps)d_k^2$ and $h_{kj}\leq0$, $j\not=k$. Notice that not all $h_{kj}$, $j\not=k$, can be zero since $H$ is not block diagonal.

Now, it is clear that $A+B>0$, a contradiction.
\end{proof}

The last optimality condition presented says that no row with the dominant diagonal entry, that is an originally unsaturated row, can enter $I^*$.

\begin{proposition}\label{propDiagDom}
Let $k\in\seznam{n}$ such that $\sum_{j=1}^nh_{kj}\Rad{x}_i\geq0$. Then $k\not\in I^*(d)$ for any optimal solution $d$ to \nref{minDist2}.
\end{proposition}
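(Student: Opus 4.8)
The plan is to argue by contradiction, assuming that some optimal solution $d$ to \nref{minDist2} has $k\in I^*(d)$, and to derive from the two preceding propositions that row $k$ cannot in fact be unsaturated at $d$. First I would invoke the standing normalization $\Rad{x}=1$, so that the hypothesis reads $\sum_{j=1}^n h_{kj}\geq0$. Since the off-diagonal entries satisfy $h_{kj}\leq0$ for $j\neq k$ by the construction of $H$, this hypothesis says precisely that row $k$ is (weakly) diagonally dominant, with $h_{kk}\geq\sum_{j\neq k}|h_{kj}|\geq0$. Recall also that $k\in I^*(d)$ means $\sum_{j=1}^n h_{kj}d_j<0$.

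The key step is to pin down the relative sizes of the $d_j$. By Proposition~\ref{propDEqual}, the entries $d_i$ with $i\in I^*$ all share a common value, which (again using $\Rad{x}=1$) I will call $c>0$; in particular $d_k=c$. By Proposition~\ref{propInewInonI}, every $i\not\in I^*$ satisfies $d_i\leq d_j$ for $j\in I^*$, hence $d_i\leq c$. Combining the two, I obtain the uniform bound $d_j\leq c=d_k$ for all $j\in\seznam{n}$, with equality on $I^*$.

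With these bounds in hand, the remaining step is a sign computation. Because $h_{kk}\geq0$ and $d_k=c$, while $h_{kj}\leq0$ and $d_j\leq c$ give $h_{kj}d_j\geq h_{kj}c$ for each $j\neq k$, I can estimate
\begin{align*}
\sum_{j=1}^n h_{kj}d_j
= h_{kk}c+\sum_{j\neq k}h_{kj}d_j
\geq h_{kk}c+\sum_{j\neq k}h_{kj}c
= c\sum_{j=1}^n h_{kj}\geq0,
\end{align*}
where the last inequality uses $c>0$ and the diagonal-dominance hypothesis. This contradicts $\sum_{j=1}^n h_{kj}d_j<0$, which was the meaning of $k\in I^*(d)$, and so completes the argument.

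I expect the main obstacle to be bookkeeping rather than anything deep: the whole argument hinges on being able to apply Propositions~\ref{propDEqual} and~\ref{propInewInonI} simultaneously at the same optimal $d$ to conclude $d_j\leq d_k$ for every $j$. Once that uniform bound is secured, the sign-chase is immediate from $h_{kj}\leq0$ off the diagonal together with $\sum_j h_{kj}\geq0$; the only points needing care are that $c>0$ (so that multiplying the dominance inequality by $c$ preserves its sign) and that the equality/inequality split between $j\in I^*$ and $j\not\in I^*$ is handled correctly.
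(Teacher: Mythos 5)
Your proof is correct and follows essentially the same route as the paper's: assume $k\in I^*(d)$, use Proposition~\ref{propDEqual} to get $d_k=d_i$ for $i\in I^*$ and Proposition~\ref{propInewInonI} to get $d_j\leq d_k$ for $j\not\in I^*$, then exploit $h_{kj}\leq0$ off the diagonal and $d_k>0$ to obtain $0>\sum_{j}h_{kj}d_j\geq d_k\sum_{j}h_{kj}\geq0$, a contradiction. The paper compresses this sign-chase into a single chain of inequalities, but the content is identical.
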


\begin{proof}
Let $d$ be an optimal solution to \nref{minDist2}, and assume that $\Rad{x}=1$. 
Suppose to the contrary that there is $k$ such that  $\sum_{j=1}^nh_{kj}\geq0$ and $\sum_{j=1}^nh_{kj}d_j<0$.
By Propositions~\ref{propDEqual} and~\ref{propInewInonI} we know that $d_k=d_i$ $\forall i\in I^*$, and $d_k\geq d_i$ $\forall i\not\in I^*$. Hence
\begin{align*}
0>\sum_{j=1}^nh_{kj}d_j\geq \sum_{j=1}^nh_{kj}d_k\geq0,
\end{align*}
a contradiction.
\end{proof}

Notice that Algorithm~\ref{alg2} yields solutions that satisfy all necessary optimality conditions given by Propositions~\ref{propDEqual}--\ref{propDiagDom}. In our numerical experiments, we have found no solution that would not be optimal. This justifies us to state it as a conjecture.

\begin{conjecture}
Algorithm~\ref{alg2} yields optimal solutions to \nref{minDist2}.
\end{conjecture}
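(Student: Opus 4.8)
The plan is to exploit a change of variables that the paper's development keeps hidden: writing $d_i=e^{t_i}$ turns the reduced problem into a \emph{convex} one, so that a single subgradient (KKT) certificate will establish global optimality of the vector returned by Algorithm~\ref{alg2}. Eliminating $\alpha$ in \nref{minDist}, the objective becomes $\Phi(t)=\sum_{i=1}^n\max\{0,\psi_i(t)\}$ with $\psi_i(t)=\tfrac12\big(-h_{ii}+\sum_{j\neq i}(-h_{ij})e^{t_j-t_i}\big)$. Since $-h_{ij}\geq0$ for $i\neq j$ and each $e^{t_j-t_i}$ is convex in $t$, every $\psi_i$ is convex, hence so is each $\max\{0,\psi_i\}$ and their sum $\Phi$; the constraint set $t\geq0$ (i.e.\ $d\geq1$) is convex, and $\Phi$ is invariant under $t\mapsto t+c\mathbf{1}$. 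Thus \nref{minDist} is a convex program, and any $\hat t$ with $0\in\partial\Phi(\hat t)$ is a global minimizer.

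First I would record the structure of the output $\hat d$ of Algorithm~\ref{alg2}. The loop stops with $H\hat d\leq0$ (the alternative $H\hat d\geq0$ is the trivial convex case $\alpha=0$), so $\hat d$ is feasible for \nref{minDist2}. Two facts about the iterates are needed: each update only \emph{decreases} entries (the monotonicity $d_I^{0}\geq H_I^{-1}a$ established in the proof of Proposition~\ref{propNonneg} via the nonnegativity of the M-matrix inverse), and once a row $i$ has been saturated, any later decrease of entries $j\neq i$ can only \emph{raise} $(Hd)_i$ because $h_{ij}\leq0$. Consequently $(H\hat d)_i<0$ forces row $i$ never to have been updated, so $\hat d_i=\Rad{x}_i$; assuming $\Rad{x}=1$ as in the paper, this yields $\hat d_i=1$ for $i\in I^*$ and $\hat d_i\leq1$ for $i\notin I^*$ --- exactly the conditions of Propositions~\ref{propDEqual} and~\ref{propInewInonI}.

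With this in hand the certificate is explicit: take the subgradient coefficients $\lambda_i=\hat d_i^{2}$, equivalently $v_i=\lambda_i/\hat d_i=\hat d_i$. On $I^*$ we have $\psi_i>0$ (smooth branch), which forces the coefficient $1$, and indeed $\lambda_i=\hat d_i^2=1$; at the kinks $i\notin I^*$ we have $\psi_i=0$ and $\lambda_i=\hat d_i^2\in[0,1]$; no row has $\psi_i<0$, by feasibility. Using the symmetry $h_{ij}=h_{ji}$, the aggregated subgradient is $g_k=\sum_i\lambda_i\,\partial_{t_k}\psi_i=\tfrac12\big(\tfrac{\lambda_k}{\hat d_k}(H\hat d)_k-\hat d_k(Hv)_k\big)$, which for $v=\hat d$ and $\lambda_k=\hat d_k^2$ collapses to $\tfrac12\hat d_k(H\hat d)_k-\tfrac12\hat d_k(H\hat d)_k=0$. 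Hence $0\in\partial\Phi(\hat t)$ and $\hat d$ is a global minimizer of \nref{minDist}; rescaling it so that $\min_i\hat d_i=1$ (which changes neither $\Phi$ nor the constraint $Hd\leq0$) gives a minimizer feasible for \nref{minDist2}, and since the optima of \nref{minDist} and \nref{minDist2} coincide by Proposition~\ref{propNonneg}, the conjecture follows.

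I expect the main obstacle to lie in the second paragraph, not in the certificate. The convexity reduction and the choice $\lambda=\hat d\circ\hat d$ are, once seen, routine to verify; the delicate work is making the iteration bookkeeping airtight --- showing that each restricted matrix $H_I$ really is an M-matrix (so that the update is well defined, nonnegative, and monotone), and that no saturated row ever re-enters $I^*$. These are the steps where a careful induction over the at most $n-1$ iterations of Algorithm~\ref{alg2} is required, and where the standing assumption that $H$ is not block diagonal must be invoked to exclude degenerate zero entries in the certificate.
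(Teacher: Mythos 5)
There is no proof in the paper to compare yours against: the statement is left as an open conjecture, supported only by the necessary optimality conditions of Propositions~\ref{propDEqual}--\ref{propDiagDom} and by the numerical experiments of Example~\ref{exTest}. Your argument therefore attempts something the paper does not, and after checking it I believe the core is sound. The substitution $d_i=e^{t_i}$ really does convexify the problem: since $h_{ij}\leq0$ for $i\neq j$ by construction of $H$, each $\psi_i$ is a nonnegative combination of exponentials of linear functions plus a constant, so \nref{minDist} with $\alpha$ eliminated is a convex program whose objective is invariant under $t\mapsto t+c\mathbf{1}$, and any subgradient certificate is a certificate of \emph{global} optimality. Your multiplier choice $\lambda_i=\hat d_i^{\,2}$ also checks out: using $h_{ij}=h_{ji}$,
\begin{align*}
g_k=\sum_i\lambda_i\frac{\partial\psi_i}{\partial t_k}(\hat t\,)
=\frac{\hat d_k}{2}\Bigl(\sum_{j\neq k}h_{kj}\hat d_j-\sum_{i\neq k}h_{ik}\hat d_i\Bigr)=0,
\end{align*}
and admissibility of the $\lambda_i$ reduces exactly to your two structural claims about the output of Algorithm~\ref{alg2}. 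Both claims are correct for the reasons you sketch: each update replaces $d_I$ by $H_I^{-1}a\leq d_I$ (entrywise nonnegativity of the M-matrix inverse), so entries are non-increasing from $\Rad{x}=1$; and a row that has ever been updated sits at $(Hd)_i=0$ and thereafter can only increase (other entries decrease, $h_{ij}\leq0$) or be reset to $0$, so it cannot terminate strictly negative --- hence $i\in I^*(\hat d\,)$ forces $\hat d_i=1$, while all other entries lie in $(0,1]$.

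Three details must be written out before this is a theorem rather than a plan. First, the output $\hat d$ has entries $\leq1$, so it is not literally feasible for \nref{minDist2}; optimality has to be phrased up to the positive rescaling you mention (the paper itself reads the conjecture this way, calling $d^*=(0.5,1,0.5)^T$ the optimum in Example~\ref{exOscil2}, so this is harmless but should be said). Second, the per-iteration bookkeeping --- each $H_I$ a nonsingular M-matrix, updated entries strictly positive, finite termination --- is genuinely load-bearing in your induction; it follows from the proof of Proposition~\ref{propNonneg} combined with the connectivity rule defining $I$ (a Z-matrix $H_I$ with $H_Id_I\geq a\geq0$, at least one strict component, and every row of $I$ linked through nonzero entries of $H_I$ to a strict row is a nonsingular M-matrix, by the paper's perturbation argument), but it must be verified at every iteration, not only the first, and the non-block-diagonality assumption invoked for positivity. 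Third, the exit $H\hat d\geq0$ needs separate dispatch: there the objective value is $0$, trivially optimal, and \nref{minDist2} is outside the scope of Proposition~\ref{propNonneg} because its positivity assumption fails. None of these is a conceptual gap; with them filled in, your argument settles the conjecture affirmatively, which goes strictly beyond what the paper establishes.
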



\section{Conclusion}

We discussed the optimal choice of the scaling vector used in the $\alpha$BB method. We proposed two local improvement heuristics. In particular, the second one is promising since it satisfies all three necessary conditions that we stated. This led us to conjecture that the method yields always optimal solutions. Further, the method not only runs in polynomial time, but also, as indicated by our numerical experiments, the average number of iterations is not much greater than one.

\subsubsection*{Acknowledgments.} 

The author was supported by the Czech Science Foundation Grant P402/13-10660S.


\bibliographystyle{abbrv}
\bibliography{alpha_comp}

\end{document}